
\documentclass[11pt]{amsart}

\usepackage{amssymb,amsmath,amscd,graphicx,
latexsym,amsthm}
\usepackage{amssymb,latexsym,amsmath,amscd,graphicx}
\usepackage[mathscr]{eucal}
  \usepackage[all]{xy}
\setlength{\parindent}{.4 in}
\setlength{\textwidth}{6.5 in}   
\setlength{\topmargin} {-.3 in}
\setlength{\evensidemargin}{0 in}
\setlength{\oddsidemargin}{0 in}
\setlength{\footskip}{.3 in}

\def\PP{{\mathbb P}}

\def\OO{{\mathcal O}}

\def\F{\mathcal{F}}
\def\E{\mathcal{E}}
\def\G{\mathcal{G}}

\def\N{\mathcal{N}}

\def\cP{\mathcal{P}}
\def\Pic0{{\rm Pic}^0}

\def\DD{{\mathbf{D}}}

\def\N{\mathcal N}

\theoremstyle{plain}

\newtheorem{theorem}{Theorem}[section]

\newtheorem{proposition/example}[theorem]{Proposition/Example}
\newtheorem{proposition}[theorem]{Proposition}

\newtheorem{corollary}[theorem]{Corollary}

\newtheorem{claim}[theorem]{Claim}

\theoremstyle{definition}
\newtheorem{definition}[theorem]{Definition}

\newtheorem{remark}[theorem]{Remark}
\newtheorem{example}[theorem]{Example}

\newtheorem{conjecture/question}[theorem]{Conjecture/Question}

\newtheorem{remark/definition}[theorem]{Remark/Definition}
\newtheorem{notation/assumptions}[theorem]{Assumptions/Notation}

\numberwithin{equation}{section}

\pagestyle{myheadings} \theoremstyle{remark}

\begin{document}

\title{Fully faithful Fourier-Mukai functors and generic vanishing }

\author[G. Pareschi]{Giuseppe Pareschi}
\address{Dipartimento di Matematica, Universit\`a di Roma, Tor Vergata, V.le della
Ricerca Scientifica, I-00133 Roma, Italy} \email{{\tt
pareschi@mat.uniroma2.it}}
\dedicatory{In memory of Alexandru T. Lascu}

\begin{abstract}  The aim of this  mainly expository note is to point out that, given an Fourier-Mukai functor, the condition making it fully faithful is  an instance of \emph{generic vanishing}. We test this point of view on some fairly classical examples, including the strong simplicity criterion of Bondal and Orlov, the standard flip and  the Mukai flop.
\end{abstract}

\thanks{}

\maketitle


\setlength{\parskip}{.1 in} 

The aim of this  mainly expository note is to point out that, given an Fourier-Mukai functor, the condition making it fully faithful is  an instance of \emph{generic vanishing}. We test this point of view on some fairly classical examples, including the strong simplicity criterion of Bondal and Orlov, the standard flip and  the Mukai flop.

The notion of generic vanishing arose in work of Green and Lazarsfeld on irregular varieties (\cite{gl1},\cite{gl2}), where they showed that the sheaves of holomorphic differential forms,  twisted with a generic topologically trivial line bundle,   satisfy a cohomological vanishing of Kodaira-Nakano type. 
The natural environment for the notion of generic vanishing introduced by Green and Lazarsfeld  is the Fourier-Mukai functor defined by the Poincar\'e line bundle. It makes sense to study the same kind of property for any FM functor (\cite{pp4},\cite{popa}). 

In this note we remark that a FM functor $\Phi_\E^{X\rightarrow Y}: D^b(X)\rightarrow D^b(Y)$ is fully faithful if and only if $\OO_Y$, the structure sheaf of $Y$,  satisfies Green-Lazarsfeld's generic vanishing condition (in the current terminology: is a geometric GV-object) with respect to the FM functor 
\[\Phi^{Y\rightarrow X\times X}_{\E\boxtimes_Y\E^\vee}:D^b(Y)\rightarrow D^b(X\times X)\]
 (see below for the notation), plus an additional condition which is usually easier to check. In essence, to be fully faithful is very close to be a generic vanishing condition. While this is certainly not a new result, but just a restatement of  well known  basic facts, it is the author's hope that this point of view can be an useful complement to the existing methods of investigating whether a given FM functor is fully faithful, in particular an equivalence. We test this by  providing different proofs of some fairly classical full-faithfulness results.

Here is what the reader will find in this paper. The first section is background on fully faithful FM functors. The second section is background about generic vanishing conditions: they are usually stated in three equivalent ways, which we recall. In Section 3 we show that, in the context of full-faithfulness, the first equivalent condition essentially boils down to the strong simplicity criterion of Bondal and Orlov. Interestingly, the natural version of Bondal-Orlov's criterion in this context works under weaker hypotheses (Prop. \ref{mio}). In Section 4 we use the second equivalent condition to give, or outline, alternative proofs about full-faithfulness of the natural FM functors associated to standard flips and Mukai flops. Finally, in the last section we prove a full-faithfulness criterion (Prop. \ref{c}) corresponding the third equivalent way of expressing generic vanishing, and we illustrate it in the example of Poincar\'e kernels. 

 \noindent\textbf{Notation.} (a) All functors (as $f^*$, $f_*$,  $\otimes$, ....) denote the  functor on the derived category of coherent sheaves. For example $\otimes$ means $\otimes^\mathbf{L}$, and the underived tensor product of coherent sheaves is denoted $tor_0$. Moreover $H^i$ means hypercohomology.\\
 (b) Unless otherwise stated, all varieties are assumed smooth and projective over an algebraically closed  ground field.
 
\vskip0.4truecm \noindent This paper is dedicated to the memory of Alexandru Lascu, my teacher and adviser back when I was an undergraduate at the University of Ferrara.

\section{Fully faithful Fourier-Mukai functors }

 Let $X$ and $Y$ be smooth projective varieties  and
\[\Phi^{X\rightarrow Y} _\E :D^b(X)\rightarrow D^b(Y)\]
a Fourier-Mukai functor of kernel $\E\in D^b(X\times Y)$. We denote 
\begin{equation}\label{dual}\E^\vee=R\mathcal{H}om(\E,\OO_{X\times Y})
\end{equation}
 and $p_X$ and $p_Y$ the projections of $X\times Y$. 
The functor
\[ \Phi^{Y\rightarrow X} _{\E^\vee\otimes p_Y^*\omega_Y[\dim Y]}: D^b(Y)\rightarrow D^b(X)
\]
is the left adjoint of $\Phi_\E$. It follows that $\Phi_\E$ is fully faithful if and only the natural morphism of functors
\begin{equation}\label{left} \Phi^{Y\rightarrow X} _{\E^\vee\otimes p_Y^*\omega_Y[\dim Y]}\circ \Phi^{X\rightarrow Y}_\E\longrightarrow id_{D^b(X)}
\end{equation} 
is an isomorphism.
The functor
$\Phi^{Y\rightarrow X} _{\E^\vee\otimes p_Y^*\omega_Y[\dim Y]}\circ \Phi^{X\longrightarrow Y}_\E$ is the FM functor  of kernel
\[\Phi^{Y\times Y\rightarrow X\times X}_{\E\boxtimes (\E^\vee\otimes p_Y^*\omega_Y)[\dim Y]}(\OO_{\Delta_Y})
\]
(e.g. \cite{huybrechts} Ex. 5.13(ii)). Therefore, since the unique kernel for $id_{D^b(X)}$ is $\OO_{\Delta_X}$,  the functor $\Phi_\E^{X\rightarrow Y}$ is fully faithful if and only if
\begin{equation}\label{left'}\Phi^{Y\times Y\rightarrow X\times X}_{\E\boxtimes (\E^\vee\otimes p_Y^*\omega_Y)}(\OO_{\Delta_Y})=\OO_{\Delta_X}[-\dim Y]
\end{equation}

Given $\E$ and $\F$ objects of $D^b(X\times Y)$, let $\E\boxtimes_Y\F$ be the object of $D^b(X\times X\times Y)$ defined as 
 \[\E\boxtimes_Y\F={p_{13}}^*\E\otimes{p_{23}}^*\F\]
  where $p_{13}$ and $p_{23}$ are the two projections of $(X\times Y)\times_Y (X\times Y)=X\times X\times Y$. 
 Since $\Phi_{\E\boxtimes \F}^{Y\times Y\rightarrow X\times X}(\OO_{\Delta_Y})=\Phi^{Y\rightarrow X\times X}_{\E\boxtimes_Y \F}(\OO_Y)$ the above condition 
 (\ref{left'})  can be also written as follows
  \begin{equation}
 \Phi^{Y\rightarrow X\times X}_{\E\boxtimes_Y (\E^\vee\otimes p_Y^*\omega_Y)}(\OO_Y)=\OO_{\Delta_X}[-\dim Y]
 \end{equation}
 or also
 \begin{equation}\label{left2}
 \Phi^{Y\rightarrow X\times X}_{\E\boxtimes_Y \E^\vee}(\omega_Y)=\OO_{\Delta_X}[-\dim Y]
 \end{equation}

 \section{Generic vanishing }

 Let $Y$ and $Z$ be  smooth projective varieties and $\cP\in D^b(Y\times Z)$.  
 Let  $z\in Z$ be a closed point and $k_z$ its residue field, seen a coherent sheaf on $Z$ supported at $z$. We have that $\Phi_{\cP}^{Z\rightarrow Y}(k_z)=i_z^*\cP$, where $i_z: Y\rightarrow Y\times Z$ is $i_z(y)=(y,z)$.
 We consider now the Fourier-Mukai functor in the opposite direction, $\Phi_{\cP}^{Y\rightarrow Z}:D^b(Y)\rightarrow D^b(Z)$. Given $F\in D^b(Y)$, we define its cohomological support loci with respect to $\Phi_{\cP}^{Y\rightarrow Z}$ as 
 \[V^i_\cP(Y,F)=\{z\in Z\>|\> h^i(Y, F\otimes \Phi^{Z\rightarrow Y}_{\cP}(k_z))>0\}\]
 
 \begin{example}\label{irregular}(\emph{Green-Lazarsfeld sets}) Let $Y$ be an irregular variety, $Z=\Pic0 Y$ and $\cP$ a Poincar\'e line bundle on $Y\times \Pic0 Y$. Then $z\in \Pic0Y$ corresponds to a line bundle $P_z$ on $Y$, which is precisely $\Phi_{\cP}^{\Pic0Y\rightarrow Y}(k_z)$.  Given a coherent sheaf $F$ on $Y$, the cohomological support loci 
 \[V^i_{\cP}(Y,F)=\{z\in\Pic0 Y\>|\> h^i(F\otimes P_z)>0\>\}
 \]
 were introduced and studied by Green and Lazarsfeld for the sheaves $ F=\Omega^j_Y$ (\cite{gl1}, \cite{gl2}) and subsequently studied for other relevant sheaves on abelian and/or irregular varieties, see e.g. \cite{pp1}, \cite{pp2}.
 \end{example}
 The following notion  was  introduced by Mihnea Popa in \cite{popa}, Def. 3.7. 
 
 \begin{definition} (\emph{Geometric GV-objects})
An object $F\in D^b(Y)$ is called \emph{a geometric GV-object} with respect to a functor $\Phi_\cP^{Y\rightarrow Z}$ if\\
 (i) $V^i_\cP(Y, F)=\emptyset$ for $i<0$ and\\
 (ii) $\mathrm{codim}_ZV^i_\cP(Y, F)\ge i$ for $i\ge 0$ \footnote{Note that in \emph{loc cit} condition (i) is stated in a different way, namely  $R^j\Phi^{X\times Y}_{\E^\vee\otimes p_Z^*\omega_Z}(\F^\vee\otimes \omega_Y)=0$ for $j>\dim Y$. This is equivalent to (i) by duality and base-change. Indeed by duality  (i) is equivalent to the fact the loci $V^j_{\cP^\vee\otimes p_Z^*\omega_Z}(Y, \F^\vee\otimes\omega_Y)$ are empty for all $j>\dim Y$, which is in turn equivalent (by an easy application of base-change) to the vanishing of the sheaves $R^j\Phi^{X\times Y}_{\E^\vee\otimes p_Z^*\omega_Z}(\F^\vee\otimes \omega_Y)=0$ for all $j>\dim Y$}.
  \end{definition}

The following result is well known, see \cite{pp3}, \cite{pp4} and especially \cite{popa}, Th, 3.8, Remark 3.10 and Cor. 4.3 and references therein. See also Remark \ref{attributions} below.

 \begin{theorem}\label{gv} In the above setting, the following are equivalent
 
 \noindent (a) $F$ is a geometric GV-object with respect to the functor $\Phi_\cP^{Y\rightarrow Z}$;
 
 \noindent (b) $\Phi^{Y\rightarrow Z}_{\cP^\vee}(F^\vee\otimes \omega_Y)$ is concentrated in cohomological degree $\dim Y$. That is:
 \[\Phi^{Y\rightarrow Z}_{\cP^\vee}(F^\vee\otimes \omega_Y)=R^{\dim Y}\Phi^{Y\rightarrow Z}_{\cP^\vee}(F^\vee\otimes \omega_Y)[-\dim Y]\]

  \noindent (c) If  $A$ is a sufficiently high multiple of an ample line bundle on $Z$ then 
  \[H^i(Y, \, \Phi^{Z\rightarrow Y}_{\cP^\vee}(A)\otimes  F^\vee\otimes \omega_Y)=0\quad\hbox{ for all $i\ne \dim Y$.} \footnote{condition (c) is more usually expressed in the dual way, namely $H^i(Y,F\otimes \Phi_{\cP[\dim Z]}^{Z\rightarrow Y}(A^\vee))=0$ for $i\ne 0$ (see e.e. \cite{pp4} Cor.3.11(b)). By duality and Serre vanishing it is easily seen that the two formulations are equivalent }\]
\end{theorem}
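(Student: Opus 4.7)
The plan is to establish the two equivalences separately: (b)$\Leftrightarrow$(c) by a Serre-vanishing plus spectral sequence argument, and (a)$\Leftrightarrow$(b) via cohomology-and-base-change combined with Grothendieck-Serre duality on $Y$. Throughout I would set $M := \Phi^{Y\to Z}_{\cP^\vee}(F^\vee \otimes \omega_Y) \in D^b(Z)$, so that condition (b) is exactly the statement that $M$ is concentrated in cohomological degree $\dim Y$.

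For (b)$\Leftrightarrow$(c), two applications of the projection formula for the two projections of $Y\times Z$ identify
\[H^i(Z,\, M\otimes A) \;\cong\; H^i(Y,\, \Phi^{Z\to Y}_{\cP^\vee}(A)\otimes F^\vee\otimes\omega_Y),\]
so (c) says precisely that $H^i(Z, M\otimes A) = 0$ for $i \ne \dim Y$ and all sufficiently ample $A$. For a bounded complex on the projective variety $Z$ this is a standard characterization of single-degree concentration: for $A$ sufficiently positive, Serre vanishing collapses the spectral sequence $E_2^{p,q} = H^p(Z, \H^q(M)\otimes A) \Rightarrow H^{p+q}(Z, M\otimes A)$ to $H^q(Z, M\otimes A) = H^0(Z, \H^q(M)\otimes A)$, and these global sections vanish for $q \ne \dim Y$ iff the sheaves $\H^q(M)$ themselves vanish. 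The reverse implication is immediate from Serre vanishing applied to $\H^{\dim Y}(M)\otimes A$.

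For (a)$\Leftrightarrow$(b), let $j_z:\{z\}\hookrightarrow Z$ denote the inclusion of a closed point and $i_z:Y\hookrightarrow Y\times Z$ the inclusion $y\mapsto(y,z)$. Flat base change along $p_Z:Y\times Z\to Z$ gives
\[Lj_z^*M \;\simeq\; R\Gamma(Y,\, F^\vee\otimes\omega_Y\otimes Li_z^*\cP^\vee),\]
and Grothendieck-Serre duality on $Y$, together with the perfectness of $\cP$ (which yields $(Li_z^*\cP^\vee)^\vee \simeq Li_z^*\cP = \Phi^{Z\to Y}_\cP(k_z)$), identifies
\[\dim_{k_z} \H^i(Lj_z^*M) \;=\; \dim_{k_z} H^{\dim Y - i}(Y,\, F\otimes \Phi^{Z\to Y}_\cP(k_z)).\]
Hence $V^{\dim Y - i}_\cP(Y,F) = \{z\in Z : \H^i(Lj_z^*M)\ne 0\}$, and (a) translates exactly into the pointwise statement that $\H^i(Lj_z^*M)=0$ for $i>\dim Y$, and the above locus has codimension $\ge \dim Y - i$ for $i\le\dim Y$.

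The main obstacle is then to upgrade this pointwise Tor-codimension condition on $M$ to the global statement (b) that $M$ is concentrated in degree $\dim Y$. The easy direction is direct: if $M\simeq G[-\dim Y]$ for a coherent sheaf $G$, then $\H^{\dim Y - j}(Lj_z^*M) = \mathrm{Tor}_j^{\OO_Z}(G, k_z)$, whose support on the smooth variety $Z$ has codimension $\ge j$ by the Auslander-Buchsbaum formula. The converse is the classical criterion of Hacon and Pareschi-Popa (see \cite{popa}, Th.~3.8, and \cite{pp3}, \cite{pp4}): one takes a finite locally free resolution of $M$ and runs a descending induction on the top nonvanishing cohomology sheaf, using depth estimates furnished by the codimension hypothesis together with Auslander-Buchsbaum to eliminate all cohomology outside degree $\dim Y$. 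I would import this step directly from the references rather than reproducing it in detail.
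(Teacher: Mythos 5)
The paper does not actually prove Theorem \ref{gv}: it states it as well known and refers to \cite{pp3}, \cite{pp4} and \cite{popa} (Th.~3.8, Rem.~3.10, Cor.~4.3), offering only the heuristic Remark \ref{duality} and the t-structure interpretation of Remark \ref{attributions}. Measured against that, your proposal is correct and in fact more explicit than the paper's treatment. Your proof of (b)$\Leftrightarrow$(c) --- the exchange isomorphism $H^i(Z,M\otimes A)\cong H^i(Y,\Phi^{Z\to Y}_{\cP^\vee}(A)\otimes F^\vee\otimes\omega_Y)$ followed by Serre vanishing collapsing the hypercohomology spectral sequence and global generation detecting the vanishing of each $\mathcal{H}^q(M)$ --- is complete and is exactly the mechanism the paper itself invokes later (the isomorphism (\ref{leray}) in the proof of Prop.~\ref{c}, whose footnote identifies it as ``the key ingredient in the proof of the equivalence between (b) and (c)''). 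Your fiberwise reduction of (a) to a condition on the derived fibers $Lj_z^*M$ via flat base change, perfectness of $\cP$ and Serre duality is also correct, as is the easy implication (b)$\Rightarrow$(a) via $\mathrm{codim}\,\{z: \mathrm{Tor}_j(G,k_z)\ne 0\}\ge j$ on the smooth $Z$. The one step you import --- that the codimension conditions on the loci where the derived fibers have cohomology force $M$ to be a sheaf in degree $\dim Y$ --- is genuinely the hard point, and deferring it to the cited sources is no worse than what the paper does; be aware, though, that \cite{pp4} does not run the direct resolution-plus-Auslander--Buchsbaum induction you describe but instead closes the circle by proving (a)$\Rightarrow$(c)$\Rightarrow$(b), while the direct local argument you gesture at is the Kashiwara t-structure route of \cite{kas} and \cite{popa} mentioned in Remark \ref{attributions}; either reference legitimately supplies the missing implication, but you should cite whichever form you actually mean.
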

 
 According to a terminology/notation due to Mukai, condition ($b$) is sometimes referred to as the fact that $F^\vee\otimes\omega_Y$ \emph{satisfies the weak index theorem with index $i=\dim Y$}. For short: WIT($\dim Y$). If this is the  case the sheaf $R^{\dim Y}\Phi^{Y\rightarrow Z}_{\cP^\vee}(F^\vee\otimes \omega_Y)$ is denoted
$ \label{hat}\widehat{\F^\vee\otimes\omega_Y}$. In this notation condition (b) is written as
 \begin{equation}\label{hat}\Phi^{Y\rightarrow Z}_{\cP^\vee}(F^\vee\otimes \omega_Y)=\widehat{\F^\vee\otimes\omega_Y}[-\dim Y]
 \end{equation}

 \begin{remark}\label{smoothness}(\emph{On the assumptions for Theorem \ref{gv}})
 Theorem \ref{gv} works under more general hypotheses: assuming that the kernel $\cP$ is a perfect complex, for the equivalence between (a) and (b)  $Z$ need not to be projective, and both $Y$ and $Z$ need not to be smooth varieties, but only Cohen-Macaulay schemes of finite type over any field (but, if $Z$ is not Gorenstein, in condition (b) $\cP^\vee $ has to be replaced with $\cP^\vee\otimes p_Z^*\omega_Z$ see \cite{popa} Remark 3.10).  The equivalence with (c) holds under the further assumption that $Z$ is projective. We refer to \cite{popa} and \cite{pp4}.
\end{remark}

 \begin{remark}\label{duality}(\emph{Conditions $(a)$ and $(b)$ for (hyper)cohomology}) A simple-minded way to see the equivalence between ($a$) and ($b$) is as follows. Let $Z$ be a point, denoted $\{pt\}$, and $\cP=\OO_{Y\times\{pt\}}$. The two conditions of the previous theorem are reduced to:\\
 ($a_0$)  $H^i(Y,F)=0$ for $i\ne 0$;\\
 ($b_0$) $H^i(Y, F^\vee \otimes \omega_Y)=0$ for $i\ne \dim Y$. 
 
  \noindent They are equivalent by Serre duality, and the meaning of the equivalence between ($a$) and ($b$) is that, for arbitrary Fourier-Mukai functors, they admit  distinct equivalent generalizations: the generalization of ($a_0$) is geometric-GV, namely  the generic vanishing of a family of hypercohomology groups. The generalization of ($b_0$) is  WIT($\dim Y$), that is the vanishing of the  hyperdirect image sheaves $R^i\Phi_{\cP^\vee}(F^\vee\otimes p_Y^*\omega_Y)$. 
 \end{remark}
 
  \begin{remark}\label{attributions}(\emph{Perverse sheaves}) The geometric-GV and WIT($\dim Y$) conditions are better stated in terms of t-structures and perverse sheaves. We refer to \cite{popa} and \cite{popa-schnell} \S6-7 for this. Briefly, it follows from a result of Kashiwara (\cite{kas}) that a geometric GV-object with respect to $\Phi_\cP^{Y\rightarrow Z}$ is an object $F$  of $D^b(Y)$ such that $\Phi_\cP F$  belongs to the heart of the dual t-structure on $D^b(Z)$. This is the equivalence between (a) and (b) in Theorem \ref{gv}.   
 \end{remark}

 \begin{remark}\label{ample sequence}[\emph{Condition (c) with ample sequences}) By duality,
 \[(\Phi_{\cP^\vee}^{Z\rightarrow Y}(A))^\vee \cong \Phi_{\cP[\dim Z]}^{Z\rightarrow Y}(A^\vee\otimes \omega_Z)\]
 Threfore
  condition (c) can be written as follows
  \[\mathrm{Hom}(\Phi^{Z\rightarrow Y}_{\cP[\dim Z]}(A^{-1}\otimes \omega_Z), F^\vee\otimes\omega_Y[j])=0\qquad\hbox{ for  all $j\ne \dim Y$}\]
  Note that, if $A$ is ample and $k>>$ then $L_k:=A^{-k}\otimes \omega_Z$ is ample sequence in $coh(Z)$.
 Condition (c) of Theorem \ref{gv}  can be stated more generally as follows: given an ample sequence $\{L_k\}$ in $coh(Z)$, 
 \[\mathrm{Hom}(\Phi^{Z\rightarrow Y}_{\cP[\dim Z]}(L_k), F^\vee\otimes \omega_Y[j])=0\qquad\hbox{ for  all $j\ne \dim Y$ and $k<<$}\]  
The equivalence with condition (b) of Theorem \ref{gv} is proved in the same way.
 \end{remark}

 \section{Full faithfulness via condition $(a)$}
 
  The relationship between full faithfulness and generic vanishing is in (\ref{left2}), which can be reformulated as follows: \\
   \emph{$\Phi_\E$ is fully faithful if and only if $\omega_Y$ satisfies WIT($\dim Y$) with respect to $\Phi_{\E\boxtimes_Y\E^{\vee}}$ -- \emph{that is condition (b) of Theorem \ref{gv}} -- and, in addition,  its transform is the sheaf $\OO_{\Delta_X}$ in cohomological degree $\dim Y$. }
  
  Experience shows that, usually, the  more difficult part  to be checked is  the WIT($\dim Y$) condition, while the additional requirement is easier. With this in mind, Theorem \ref{gv} provides three distinct ways of checking full-faithfulness.
 
  Condition (a) leads to the classical strong simplicity criterion of  Bondal-Orlov (see \cite{bo}, \cite{br} and \cite{huybrechts} \S7.1). See also \cite{hls} and \cite{lopez} for generalizations). Actually one gets the result under  weaker hypotheses. To this purpose, let us  consider the loci
 \[W^i_\E(Y)=\{(x,x^\prime)\in X\times X\>|\> \mathrm{Hom}
 (\Phi^{X\rightarrow Y}_\E(k_x), \Phi^{X\rightarrow Y}_\E(k_{x^\prime}))[i])\ne 0\}\]
 
 \begin{proposition}\label{mio} Assume that $\mathrm{char} \, k=0$. Then $\Phi_{\E}^{X\rightarrow Y}:D^b(X)\rightarrow  D^b(Y)$ is fully faithful if and only if the following conditions hold:
 
 \noindent (a) $W^i_\E(Y)$ is empty for $i<0$;
 
\noindent (b)  $\dim W^i_\E(Y)\le 2\dim X-i$ for all $i\ge 0$;
 
\noindent (c)  $\mathrm{Hom} (\Phi^{X\rightarrow Y}_\E(k_x), \Phi^{X\rightarrow Y}_\E(k_{x^\prime}))= k$ if $x=x^\prime$ and
 $0$ otherwise.
 \end{proposition}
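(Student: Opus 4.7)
The starting point is the reformulation (\ref{left2}) from Section 1: $\Phi_\E^{X\rightarrow Y}$ is fully faithful if and only if
\[
\Phi^{Y\rightarrow X\times X}_{\E\boxtimes_Y\E^\vee}(\omega_Y)\cong\OO_{\Delta_X}[-\dim Y].
\]
My plan is to split this identity into two separate assertions: (i) the object on the left is concentrated in cohomological degree $\dim Y$, and (ii) its unique nonzero cohomology sheaf $\G$ equals $\OO_{\Delta_X}$.

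For (i), I would apply the equivalence $(a)\Leftrightarrow(b)$ of Theorem \ref{gv} to the sheaf $F=\OO_Y$ with respect to the Fourier--Mukai functor $\Phi^{Y\rightarrow X\times X}_{\cP}$ with $\cP:=\E^\vee\boxtimes_Y\E$. Since $\cP^\vee\cong\E\boxtimes_Y\E^\vee$ (by perfectness) and $\OO_Y^\vee\otimes\omega_Y=\omega_Y$, condition $(b)$ of Theorem \ref{gv} is precisely (i). A direct base-change calculation yields
\[
\Phi^{X\times X\rightarrow Y}_{\E^\vee\boxtimes_Y\E}(k_{(x,x')})\cong R\mathcal{H}om(\E_x,\E_{x'}),
\]
where $\E_x:=\E|_{\{x\}\times Y}=\Phi_\E^{X\rightarrow Y}(k_x)$, whence
\[
V^i_{\cP}(Y,\OO_Y)=\{(x,x')\in X\times X:\Ext^i(\E_x,\E_{x'})\ne 0\}=W^i_\E(Y).
\]
Conditions (a) and (b) of the proposition then translate word for word into the geometric GV conditions for $\OO_Y$ (since $\dim(X\times X)=2\dim X$), and (i) follows.

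For (ii), set $\G:=R^{\dim Y}\Phi^{Y\rightarrow X\times X}_{\E\boxtimes_Y\E^\vee}(\omega_Y)$. Since (i) places the transform in a single cohomological degree, cohomology and base change gives
\[
\G\otimes k_{(x,x')}\cong H^{\dim Y}(Y,\ \omega_Y\otimes\E_x\otimes\E_{x'}^\vee)\cong\Hom(\E_x,\E_{x'})^\vee
\]
by Serre duality on $Y$. Condition (c) forces $\G$ to have one-dimensional fiber along $\Delta_X$ and zero fiber elsewhere, so $\G$ is set-theoretically supported on $\Delta_X$ and $\Delta^*\G$ is a line bundle on $X$. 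The counit of adjunction provides a canonical morphism $\G[-\dim Y]\rightarrow\OO_{\Delta_X}$ in $D^b(X\times X)$ which, by the fiber computation above and condition (c), is nonzero at each closed point of $\Delta_X$.

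The main obstacle is this last step: upgrading the pointwise information to the scheme-theoretic identification $\G\cong\OO_{\Delta_X}$. This is exactly the place where the hypothesis $\mathrm{char}\,k=0$ enters, via a classical Bondal--Orlov deformation / generic-smoothness argument (ruling out embedded components and non-reduced structure along $\Delta_X$). By contrast, conditions (a), (b) and the pointwise content of (c) themselves go through in arbitrary characteristic, so the characteristic hypothesis is really only needed for this final scheme-theoretic identification.
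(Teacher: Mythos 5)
Your proposal follows essentially the same route as the paper's proof: you identify $W^i_\E(Y)$ with the cohomological support loci $V^i_{\cP}(Y,\OO_Y)$ via a base-change computation, invoke the equivalence $(a)\Leftrightarrow(b)$ of Theorem \ref{gv} to reduce (a)+(b) to concentration of the transform in degree $\dim Y$, use cohomology and base change together with (c) to pin down the fibers of $\widehat{\omega_Y}$ along $\Delta_X$, and defer the final scheme-theoretic identification to the characteristic-zero Kodaira--Spencer argument of Bondal--Orlov/Bridgeland -- exactly as the paper does. The only cosmetic difference is your choice of $\cP=\E^\vee\boxtimes_Y\E$ versus the paper's $\E\boxtimes_Y\E^\vee$, which amounts to swapping the two factors of $X\times X$.
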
 
 \begin{proof} Since $X$ is smooth
  \begin{equation}\label{smoothpoint}
  (\Phi^{X\rightarrow Y}_\E(k_x))^\vee\cong \Phi^{X\rightarrow Y}_{\E^\vee}(k_x)
  \end{equation} 
  for all $x\in X$. Therefore 
 \[\mathrm{Hom}
 (\Phi^{X\rightarrow Y}_\E(k_x), \Phi^{X\rightarrow Y}_\E(k_{x^\prime}))[i])
=\mathrm{Ext}^i(\Phi^{X\rightarrow Y}_\E(k_x), 
 \Phi^{X\rightarrow Y}_\E(k_{x^\prime})\cong \]
 \[\cong H^i(Y,\Phi^{X\rightarrow Y}_{\E^\vee}(k_x)\otimes \Phi^{X\rightarrow Y}_\E(k_{x^\prime}))=H^i(Y, \Phi^{X\times X\rightarrow Y}_
 {\E\boxtimes_Y
 \E^\vee}(k_{(x^\prime,x)}))\]
 It follows that
 \[W^i_\E(Y)=V^i_{\E\boxtimes_Y\E^\vee}(Y,\OO_Y)\]
 Therefore (a) and (b) mean exactly that $\OO_Y$ is a geometric GV-object with respect to $\Phi^{Y\rightarrow X\times X}_{\E\boxtimes_Y\E^\vee}$. By (a) $ \Leftrightarrow $ (b) of Theorem \ref{gv}, this is equivalent to the fact that $\Phi_{\E^\vee\boxtimes_Y \E}^{Y\rightarrow X\times X}(\omega_Y)$ is a coherent sheaf  on $X\times X$ in cohomological degree $\dim Y$ (note that $(\E\boxtimes_Y\E^\vee)^\vee\cong \E^\vee\boxtimes_Y\E$). According to notation (\ref{hat}), we denote $\widehat{\omega_Y}$ this coherent sheaf. Hypotheses (a) and (c) of the present Theorem imply, by cohomology and base change, that this sheaf is in fact a line bundle on a possibly non-reduced variety supported on the diagonal $\Delta_X$\footnote{in fact one known that, since $\OO_Y$ is geometric-GV with respect to $\Phi_{\cP}^{Y\rightarrow X\times X}$, the sheaf $\widehat \omega_Y$ has the "base-change property", namely, in the present case, the natural map $tor_0(\widehat\omega_Y,k_{(x,x\prime)})\rightarrow H^{\dim Y}(Y,\omega_Y\otimes \Phi_{\E^\vee\boxtimes_Y\E}^{X\times X\rightarrow Y}(k_{(x,x^\prime)}))$ is an isomorphism}. But in fact, as the ground field is assumed to be algebraically closed of characteristic zero, actually 
 \[\widehat{\omega_Y}={\delta_X}_*L\>,\] where $\delta_X:X\rightarrow X\times X$ is the diagonal embedding and $L$ is a line bundle on $X$: this is proved exactly as in Bridgeland's account of Bondal-Orlov's theorem, using the Kodaira-Spencer map (\cite{br} Lemmas 5.2-3 or \cite{huybrechts} Steps 3 and 5 or the proof of the main result in \cite{lopez}), so we won't reproduce this argument here. This already proves that $\Phi_\E$ is fully faithful (and, a posteriori, $L=\OO_X$). \end{proof}

 \begin{corollary}\label{bo} (Bondal-Orlov) $\Phi_\E$ is fully faithful if and only if 
\[\mathrm{Hom} (\Phi^{X\rightarrow Y}_\E(k_x), \Phi^{X\rightarrow Y}_\E(k_{x^\prime})[i])=\begin{cases} k&\hbox{if $x=x^\prime$ and $i=0$}\\
 0&\hbox{if $x\ne x^\prime$, or  $i<0$, or $i>\dim X$}\end{cases}\]
 \end{corollary}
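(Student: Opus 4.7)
The plan is to deduce the corollary directly from Proposition \ref{mio}: the Bondal--Orlov hypotheses are a strengthening of the three conditions of \ref{mio}, so the ``if'' direction will be immediate once one verifies the dimension bound, and the ``only if'' direction is the standard Ext-computation on a smooth variety.

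For the ``only if'' direction, I would invoke full faithfulness to identify $\mathrm{Hom}(\Phi^{X\rightarrow Y}_\E(k_x), \Phi^{X\rightarrow Y}_\E(k_{x'})[i])$ with $\mathrm{Ext}^i_X(k_x, k_{x'})$. When $x \neq x'$ the residue sheaves have disjoint supports and all $\mathrm{Ext}$'s vanish. When $x = x'$ I would compute $\mathrm{Ext}^i_X(k_x, k_x) \cong \wedge^i T_x X$ via the Koszul resolution at $x$ on the smooth variety $X$; this gives $k$ for $i = 0$ and vanishes for $i < 0$ and for $i > \dim X$.

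For the ``if'' direction, I would verify that the Bondal--Orlov hypotheses imply the three conditions (a), (b), (c) of Proposition \ref{mio}. Condition (a) of \ref{mio} (emptiness of $W^i_\E(Y)$ for $i<0$) and condition (c) of \ref{mio} (the normalization at $i=0$) are literal transcriptions of parts of the hypothesis. For condition (b) of \ref{mio}, the off-diagonal vanishing in the hypothesis forces $W^i_\E(Y) \subseteq \Delta_X$ for every $i > 0$, while the vanishing for $i > \dim X$ forces $W^i_\E(Y) = \emptyset$ in that range. Since $\dim \Delta_X = \dim X$, the bound $\dim W^i_\E(Y) \le 2\dim X - i$ holds automatically for all $0 < i \le \dim X$ and is vacuous for $i > \dim X$; the case $i = 0$ is covered because $W^0_\E(Y) = \Delta_X$ has dimension $\dim X \le 2\dim X$.

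The main obstacle here is essentially none: Proposition \ref{mio} has already done all the real work, and the corollary is just a bookkeeping exercise matching its hypotheses to the Bondal--Orlov formulation. The only point requiring a moment's thought is the dimension estimate in condition (b), which works precisely because the off-diagonal vanishing in the Bondal--Orlov hypothesis pins $W^i_\E(Y)$ to the diagonal for $i > 0$.
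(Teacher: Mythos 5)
Your proposal is correct and follows essentially the same route as the paper: the paper's proof likewise restates the Bondal--Orlov hypotheses as saying that $W^i_\E(Y)$ is empty for $i<0$ and $i>\dim X$ and contained in $\Delta_X$ for $0\le i\le\dim X$, so that conditions (a)--(c) of Proposition \ref{mio} hold because $\dim\Delta_X=\dim X\le 2\dim X-i$ in that range. Your additional spelling-out of the ``only if'' direction via $\mathrm{Ext}^i_X(k_x,k_{x'})$ and the Koszul resolution is correct and standard; the paper leaves it implicit.
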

 \begin{proof} The hypotheses can be restated as follows: $\mathrm{Hom} (\Phi^{X\rightarrow Y}_\E(k_x), \Phi^{X\rightarrow Y}_\E(k_{x^\prime}))= k$\  if $x=x^\prime$ and
 \[W^i_\E(Y) \begin{cases}=\emptyset& \hbox{for $i<0$}\\ \subseteq \Delta_X& \hbox{for $0\le i\le \dim X$}\\ =\emptyset& \hbox{for $i>\dim X$.}
 \end{cases}\]
  Therefore Proposition \ref{mio} implies the Corollary. \end{proof}
  
  \begin{remark}(\emph{On the assumptions for Prop. \ref{mio}  and Corollary \ref{bo}})  (i) As pointed out in \cite{hls} Remark 1.25 and \cite{lopez} the characteristic zero is necessary, unless one puts a supplementary hypothesis.\\
   (ii) Checking carefully  more general assumptions for te validity of  (\ref{left}) and for the equivalence between (a) and (b) in Theorem \ref{gv}, it follows that Prop. \ref{mio} and Corollary \ref{bo} work under more general hypotheses on $X$ and $Y$: $X$ needs to be smooth but not necessarily projective, while $Y$ needs to be projective but it is allowed to be singular (Cohen-Macaulay). This is a result in \cite{hls}, see also \cite{lopez} and references therein. 
  \end{remark}

  \section{Full faithfulness via condition (b)} 
  
  In this section we will consider some examples where, from the point of view of generic vanishing, the easiest way of proving/disproving full faithfulness is given by condition (\ref{left2}) at once, which corresponds to condition (b) of Theorem \ref{gv}. This amounts to 
  \begin{equation}\label{b}R^i\Phi_{\E\boxtimes_Y\E^\vee}(\omega_Y)=\begin{cases}0&\hbox{for $i\ne\dim Y$}\\
  \OO_{\Delta_X}&\hbox{for $i=\dim Y$}\\
  \end{cases}
  \end{equation}
  This is certainly a bit old fashioned (for example, it is the way Mukai originally showed in \cite{mukai} that the Poincar\'e kernel provides a derived equivalence between dual abelian varieties) but however the proofs below are easy, self-contained and conceptually clear, and might provide a complementary insight on some aspects of   Kawamata's conjecture K-equivalence $\Rightarrow$ D-equivalence.
  
  \begin{example} (\emph{Standard flip}) We consider a standard flip

  \begin{equation}\xymatrix{&&E=\mathbb{P}^l\times\mathbb{P}^k\ar[ddll]_{\pi_X}\ar@{^{(}->}[d]\ar[rrdd]^{\pi_Y}\\
  &&Z\ar[ld]_{p}\ar[rd]^{q}\\
  \mathbb{P}^l\>\>\>\ar@{^{(}->}[r]&X&&Y&\>\>\>\mathbb{P}^k\ar@{_{(}->}[l]\\}
  \end{equation}
  where $\N_{\PP^l/X}=\OO(-1)^{k+1}$ and $\N_{\PP^k/Y}=\OO(-1)^{l+1}$, so that the dimension of the varieties $X$, $Y$ and $Z$ is \  $d=k+l+1$.  The morphism $\pi_X$ (resp. $\pi_Y$) is the blow up of $\PP^l$ (resp. $\PP^k$).  Note that the functor $\Phi_{\OO_Z}^{X\rightarrow Y}$ coincides with ${q}_*\circ p^*$. The result, again due to Bondal and and Orlov \cite{bo},  is that: \ \ 
 \emph{ $\Phi_{\OO_Z}^{X\rightarrow Y}:D^b(X)\rightarrow D^b(Y)$ is fully faithful if and only if  $k\le l$. } 
 
  \noindent Let us prove this statement by verifying condition (\ref{b}).
We have that  
 \[\OO_{Z}^\vee=\omega_{Z}\otimes \omega_{X\times Y}^{-1}[-d]\]  
  It follows that in this case conditon (\ref{b}) takes the form
 \begin{equation}\label{b'}
 R^i\Phi_{\OO_Z\boxtimes_Y\omega_Z}(\OO_Y))=\begin{cases}0&\hbox{for $i\ne 0$}\\
  \delta_*(\omega_X)&\hbox{for $i=0$}\\
  \end{cases}
  \end{equation}
   Enumerating the four factors of the product $X\times X\times Y\times Y$, we denote $Z_{ij}$ the subvariety $Z$ of $X_i\times Y_j$. By definition $\OO_{Z}\boxtimes_Y\OO_Z$ is the following derived tensor product in $D(X\times X\times Y\times Y)$:
   \[\OO_{Z}\boxtimes_Y\OO_Z=(\OO_{Z_{13}}\boxtimes \OO_{Z_{24}})\otimes (\OO_{X\times X\times \Delta_{34}Y})\]
    The intersection in $X\times X\times Y\times Y$ of the two smooth and irreducible subvarieties $Z_{13}\times Z_{24}$ and $X\times X\times \Delta_{34}Y$ is
  the fibred product $Z\times_Y Z$. It has two irreducible components: 
   \begin{equation}\label{intersection}(Z_{13}\times Z_{24})\cap ( X\times X\times \Delta_{34}Y)=(\Delta_{13,24}\, Z)\cup (E\times_{\PP^k}E)=
   (\Delta_{12,34}\,Z)\cup (\PP^l\times\PP^l\times\Delta_{34}\PP^k)
    \end{equation}
   where $\Delta_{12,34} :(X\times Y)\hookrightarrow X\times X\times Y\times Y$ is the diagonal embedding $(x,y)\mapsto (x,x,y,y)$.  Sometimes we will simply write the right hand side as
    \[Z\,\cup\, \PP^l\times\PP^l\times\PP^k\]
    The two components are smooth and their intersection is $E=\Delta_{12,34}E=\Delta_{12}\PP^l\times \Delta_{34}\PP^k$. 
    
    The first component has the right codimension, namely $3d=2d+d$, whicle the second component has codimension $3d-(l-1)$ (hence it has the right codimension only for $l=1$). Since (\ref{intersection})  is the intersection of two smooth, hence locally complete intersection subvarieties of a smooth ambient variety, it follows that the higher $tor_i^{X\times X\times Y\times Y}(\OO_{Z_{13}\times Z_{24}},\OO_{X\times X\times\Delta_{34}\, Y})$ are non-zero only if $l>1$, and they are supported on $\PP^l \times\PP^l\times\PP^k$. Moreover $tor_1$ is locally free of rank $l-1$ and $tor_i=\wedge^i\,tor_1$ for $i\ge 1$. More precisely, we have the following
    \begin{claim}\label{claim} for $i>0$ \ $tor_i^{X\times X\times Y\times Y}(\OO_{Z_{13}\times Z_{24}},\OO_{X\times X\times\Delta_{34}\, Y})=\bigwedge^i\bigl(\OO_{\PP^l\times\PP^l\times\PP^k}(0,0,1)^{\oplus l-1}\bigr)$
    \end{claim}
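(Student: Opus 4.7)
The plan is to verify the claim by a direct local Koszul computation near the component $W$. Since $\OO_{\Delta_Y}$ has a Koszul resolution on $Y\times Y$ (valid locally, as the diagonal is a regular embedding), pulling it back to $Z\times Z$ and tensoring with $\OO_{Z_{13}\times Z_{24}}$ yields a Koszul complex whose cohomology sheaves compute the $tor$. To make this explicit, I would introduce local coordinates $(y, u_1,\ldots,u_l, s)$ on $Z$ near a point of $E$, using the description of $Z$ as the blow-up of $\PP^k\subset Y$: then $\pi_Y$ is $(y,u,s)\mapsto (y, s, su_1, \ldots, su_l)$ and $E=\{s=0\}$. In these coordinates, the equations of $\Delta_Y$ pull back on $Z\times Z$ to $y^{(1)}-y^{(2)}$ ($k$ of them), $s_1-s_2$, and $s_1u_i^{(1)}-s_2u_i^{(2)}$ ($l$ of them).

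The first $k+1$ pulled-back equations form a regular sequence, so by the usual splitting of Koszul complexes their contribution to higher $tor$ vanishes; quotienting by them (i.e.\ setting $y^{(1)}=y^{(2)}$ and $s_1=s_2=s$) reduces the problem to computing the cohomology of $K(su'_1,\ldots,su'_l; R')$, where $R'$ is the local quotient ring (in local coordinates $(u^{(1)}, u^{(2)}, y, s)$) and $u'_i:=u_i^{(1)}-u_i^{(2)}$.

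The core observation is that $(u'_1,\ldots,u'_l)$ is itself a regular sequence in $R'$, so $K(u'_1,\ldots,u'_l;R')$ is exact in positive degrees. Since the Koszul differential for $(su'_i)$ equals $s$ times the differential for $(u'_i)$, and $s$ is a non-zero-divisor on $R'$, combining this with the short exact sequence $0\to R'\xrightarrow{s}R'\to R'/(s)\to 0$ shows that the higher Koszul cohomology for $(su'_i)$ is supported on $\{s=0\}=W$ (locally), with generic rank $\binom{l-1}{i}$ as a sheaf on $W$. The cohomology in degree $i$ then identifies locally with $\bigwedge^i$ of a rank-$(l-1)$ object on $W$.

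The final step, and the main technical obstacle, is identifying this object globally as $\OO_W(0,0,1)^{\oplus(l-1)}$. The twist $\OO_W(0,0,1)$ on the $\PP^k$-factor arises from viewing the local generator $s$ as a section of $N^\vee_{E/Z}\cong\OO_{\PP^l\times\PP^k}(-1,1)$ and tracking its restriction to $W$ via the two embeddings $W\to E$ coming from the two copies of $Z$, together with the identification $s_1=s_2$. The rank $l-1$ reflects that, of the $l$ equations $su'_i$, one direction is absorbed by the diagonal-type constraint defining $W\subset Z\times_Y Z$. Matching the abstract Koszul cohomology to this concrete description requires careful bookkeeping of transition functions across coordinate charts on both $\PP^l$-factors and the $\PP^k$-factor, and exploiting the global structure of the exceptional divisor in the blow-up.
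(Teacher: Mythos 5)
Your reduction to the Koszul complex $K(su'_1,\dots,su'_l;R')$ is correctly set up and is a genuinely different route from the paper's: the paper never passes to blow-up coordinates, but instead first computes the Tor for $\OO_{E_{13}\times E_{24}}$ --- an external-product computation reducing to $tor_i^Y(\OO_{\PP^k},\OO_{\PP^k})=\wedge^i\N^\vee_{\PP^k/Y}$, hence global, chart-free and carrying its twists automatically --- and then transports the answer to $\OO_{Z_{13}\times Z_{24}}$ through the long exact Tor sequences of the two displayed restriction sequences. Your argument, however, stops exactly where the content of the Claim begins, in two places.

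First, the assertion that the degree-$i$ cohomology ``identifies locally with $\wedge^i$ of a rank-$(l-1)$ object on $W$'' does not follow from the observations you state, and it is precisely what has to be proved. Pushing your own setup one step further: since $d_i^{(su')}=s\,d_i^{(u')}$, one has $\ker(s\,d_i)=\ker(d_i)=\mathrm{im}(d_{i+1})$ (exactness of $K(u';R')$ in positive degrees) and $\mathrm{im}(s\,d_{i+1})=s\,\mathrm{im}(d_{i+1})$, so $H_i\bigl(K(su'_1,\dots,su'_l;R')\bigr)=B_i/sB_i$ where $B_i=\mathrm{im}(d_{i+1}^{(u')})$ is the $i$-th Koszul syzygy module of the regular sequence $(u'_1,\dots,u'_l)$. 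This has generic rank $\binom{l-1}{i}$ on $\{s=0\}$, as you say; but $B_i$ requires $\binom{l}{i+1}$ minimal generators, so at a point where all the $u'_j$ vanish --- i.e.\ on the locus $\Delta_{12}\PP^l\times\Delta_{34}\PP^k$ where the component $W$ meets the diagonal component of $Z\times_YZ$ --- the fibre of $B_i/sB_i$ has dimension $\binom{l}{i+1}$, which exceeds $\binom{l-1}{i}$ for every $i$ with $1\le i\le l-2$. So for $l\ge 3$ your local model does not visibly produce a locally free sheaf of the claimed rank along the intersection of the two components; reconciling this with the statement is the real difficulty, and your sketch neither performs that reconciliation nor acknowledges it. Second, you explicitly defer the global identification of the answer as $\OO_{\PP^l\times\PP^l\times\PP^k}(0,0,1)^{\oplus(l-1)}$ --- the twist and the gluing over the $(l+1)^2$ pairs of blow-up charts --- as ``the main technical obstacle.'' A proof cannot end by naming its main obstacle; as it stands this is a plan for a proof rather than a proof, and the plan has an unresolved problem at its core step.
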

    \begin{proof} We first compute 
    \[tor_i^{X\times X\times Y\times Y}(\OO_{E_{13}\times E_{24}},\OO_{X\times X\times\Delta_{34}\, Y})=tor_i^{X\times X\times Y\times Y}(\OO_{\PP^l\times\PP^l\times\PP^k\times \PP^k},\OO_{X\times X\times\Delta_{34}\, Y})=\]
    \[=p_{34}^*\N^\vee_{\PP^k/Y}=p_{34}^*\wedge^i(\OO(1)^{\oplus l+1})=\wedge^i(\OO_{\PP^l\times\PP^l\times\PP^k}(0,0,1)^{\oplus l+1})\]
    The third equality follows from the general isomorphism  $(\F\boxtimes\G)\otimes_{Y\times Y} \OO_{\Delta_Y}=\F\otimes_Y\G$ (where  for $F,G\in D(Y)$). Therefore $tor_i^{Y\times Y}(\F\boxtimes \G,\OO_{\Delta_Y})=tor_i^Y(F,G)$. In our case \[tor_i^{Y\times Y}(\OO_{\PP^k}\boxtimes \OO_{\PP^k}, \OO_{\Delta_Y})\cong tor_i^Y(\OO_{\PP^k},\OO_{\PP^k})=\wedge^i\N^\vee_{\PP^k/Y}.\] 
    
    Next we compute the difference between $tor_i^{X\times X\times Y\times Y}(\OO_{Z_{13}\times Z_{24}},\OO_{X\times X\times\Delta_{34}\, Y})$ \hfill\break and $tor_i^{X\times X\times Y\times Y}(\OO_{E_{13}\times E_{24}},\OO_{X\times X\times\Delta_{34}\, Y})$. This is achieved by tensoring with $\OO_{X\times X\times \Delta_{34} Y}$ the two exact sequences
    \[0\rightarrow \OO_{Z_{13}}\boxtimes \OO_{Z_{24}}(-E_{24})\rightarrow \OO_{Z_{13}\times Z_{24}}\rightarrow \OO_{Z_{13}\times E_{24}}\rightarrow 0\]
    \[0\rightarrow\OO_{Z_{13}}(-E_{13})\boxtimes \OO_{E_{24}}\rightarrow \OO_{Z_{13}\times E_{24}}\rightarrow \OO_{E_{13}\times E_{24}}\rightarrow 0\]
    The assertion follows after a little calculation with the first exact sequence.
    \end{proof}
    
 Concerning the underived tensor product, since
 \[tor_0(\OO_{Z_{13}}\boxtimes \OO_{Z_{24}},\OO_{X\times X\times \Delta_{34}Y})=\OO_{(\Delta_{12,34}\,Z)\cup (\PP^l\times\PP^l\times\Delta_{34}\PP^k)}\]
 we have the "Mayer-Vietoris" exact sequence
    \[0\rightarrow tor_0(\OO_{Z_{13}}\boxtimes \OO_{Z_{24}},\OO_{X\times X\times \Delta_{34}Y})\rightarrow \OO_{\Delta_{13,24}Z}\oplus (\OO_{\PP^l\times\PP^l\times\Delta_{34}\PP^k})\rightarrow \OO_{\Delta_{12}\PP^l\times\Delta_{34}\PP^k}\rightarrow 0\]
    Since
    \[\omega_Z|_E=\OO(-l,-k)\]
    from the Claim we get that, for $i>0$
    \begin{equation}\label{tori}tor_i^{X\times X\times Y\times Y}(\OO_{Z_{13}}\boxtimes\omega_{Z_{24}},\OO_{X\times X\times\Delta_{34}\, Y})=
    \OO_{\PP^l\times\PP^l\times\PP^k}(0,-l,-k+i)^{\oplus {{l-1}\choose i}}
    \end{equation}
    (in particular, it vanishes for $i>l-1$). For $i=0$ we have the exact sequence
     \begin{equation}\label{tor0}0\rightarrow tor_0(\OO_{Z_{13}}\boxtimes \omega_{Z_{24}},\OO_{X\times X\times \Delta_{34}Y})\rightarrow \omega_{\Delta_{13,24}Z}\oplus \OO_{\PP^l\times\PP^l\times\Delta_{34}\PP^k}(0,-l,-k)\rightarrow \OO_{\Delta_{12}\PP^l\times\Delta_{34}\PP^k}(-l,-k)\rightarrow 0
     \end{equation}
     Applying ${p_{X\times X}}_*$, i.e. ${p_{12}}_*$, to  (\ref{tor0}) it follows easily that in any case 
      \begin{equation}\label{0}R^j {p_{12}}_*(   tor_0^{X\times X\times Y\times Y}(\OO_{Z_{13}}\boxtimes\omega_{Z_{24}},\OO_{X\times X\times\Delta_{34}\, Y}))=R^j {p_{12}}_*(\omega_{\Delta_{13,24}Z})=\begin{cases} \omega_{\Delta _{12}X}&\hbox{for $j=0$ and $i=0$}\\
     0&\hbox{otherwise}\\
     \end{cases}
     \end{equation}
  because $p_{12}$ restricted to $\Delta_{13,24}Z$ is simply the birational morphism $p:Z\rightarrow X$. Hence the above $tor_0$ does not cause any obstruction to the validity of (\ref{b'}). On the other hand, applying ${p_{12}}_*$ to (\ref{tori}) one sees that the vanishing
     \begin{equation}\label{i}{R^jp_{12}}_*(   tor_i^{X\times X\times Y\times Y}(\OO_{Z_{13}}\boxtimes\omega_{Z_{24}},\OO_{X\times X\times\Delta_{34}\, Y}))=0\quad\hbox{for all $i>0$ and all $j$}
     \end{equation}
    holds if and only if $k\ge l$. 
     Via an easy spectral sequence, (\ref{0}) and (\ref{i}) prove that (\ref{b'}), i.e.  full-faithfulness of $\Phi_{\OO_Z}^{X\rightarrow Y}:D^b(X)\rightarrow D^b(Y)$, holds if $k\ge l$. In a similar way it follows also that the full-faithfulness does not hold for $k<l$.
    \end{example}
    
    \begin{example} (\emph{Mukai flop}) We follow the notation of \cite{huybrechts}, \S11.4. We have the diagram
      \begin{equation}\xymatrix{&&\>\>\>\>\>\>\>\>\>\>\>\>\>\>\>\>\>\>\>\>{E}\subset\mathbb{\PP}\times\mathbb{\PP}^\vee\ar[ddll]_{\pi_X}\ar@{^{(}->}[d]\ar[rrdd]^{\pi_Y}\\
  &&Z\ar[ld]_{p}\ar[rd]^{q}\\
  \mathbb{P}\>\>\>\ar@{^{(}->}[r]&X&&Y&\>\>\>\mathbb{P}^\vee\ar@{_{(}->}[l]\\}
  \end{equation}
  and $\N_{\PP|X}=\Omega_{\PP}$, $\N_{\PP^\vee|Y}=\Omega_{\PP^\vee}$. Here $\dim X=2n$ and $\PP=\PP^n$. The maps $p$ (resp. $q$) is the blow-up of $\PP$ (resp. $\PP^\vee$) and $E=\PP(\Omega_\PP)\subset \PP\times\PP^\vee$ is the incidence correspondence point-hyperplane. 
   It is well known, by a result of Kawamata and  Namikawa (\cite{kawa},\cite{nami}) that: \emph{ the functor ${q}_*\circ p^*=\Phi_{\OO_Z}:\DD^b(X)\rightarrow \DD^b(Y)$ is not fully faithful}. 
   
   \noindent Let us check this within the method of the previous example.  Exactly as above, the condition for full-faithfulness is (\ref{b'}), and one has to compute  
  \begin{equation}\label{torsmukai} tor_i^{X\times X\times Y\times Y}(\OO_{Z_{13}}\boxtimes \omega_{Z_{24}},\OO_{X\times X\times\Delta_{34}\, Y})
  \end{equation} 
  Again the intersection  in $X\times X\times Y\times Y$ of $Z_{13}\times Z_{24}$ and $X\times X\times \Delta_{Y_{34}}$ is
  the fibered product $Z\times_Y Z$, which has the two irreducible components: 
 \[(Z_{13}\times Z_{24})\cap (X\times X\times \Delta_{Y_{34}})=Z\times_Y Z=\Delta_{13,24}Z\cup (E_{13}\times_{\PP^\vee} E_{24}) \]
  One can compute all $tor$ sheaves (\ref{torsmukai}) as in Claim \ref{claim} and the result is similar. It happens that higher tor's (i.e. the sheaves (\ref{torsmukai}) for $i>0$) don't affect condition (\ref{b'}), namely
     \begin{equation}\label{imukai}{R^jp_{12}}_*(   tor_i^{X\times X\times Y\times Y}(\OO_{Z_{13}}\boxtimes\omega_{Z_{24}},\OO_{X\times X\times\Delta_{34}\, Y}))=0\quad\hbox{for all $i>0$ and all $j$}
     \end{equation}
   We leave this to the reader. 
  
  The reason why (\ref{b'}) is not satisfied is in the underived tensor product 
  \[tor_0^{X\times X\times Y\times Y}(\OO_{Z_{13}}\boxtimes \omega_{Z_{24}},\OO_{X\times X\times\Delta_{34}\, Y})\]
    As in the previous example this sits in the exact sequence

      \begin{equation}\label{mv}0\rightarrow tor_0(\OO_{Z_{13}}\boxtimes \omega_{Z_{24}},\OO_{X\times X\times \Delta_{34}Y})\rightarrow \omega_{\Delta_{13,24}Z}\oplus \bigl((p_{24}^*\omega_{Z})_{|E_{13}\times_{\Delta_{24}\PP^\vee} E_{34}})\bigr)\rightarrow (\omega_{\Delta_{13,24}Z})_{|\Delta_{13,24}E}\rightarrow 0
      \end{equation}
      We apply  ${p_{X\times X}}_*$, i.e. ${p_{12}}_*$ to the above exact sequence. Since $(\omega_X)_{|\PP}$ is trivial, we have that
      \begin{equation}\label{omega}(\omega_Z)_{|E}=\omega_E(-E)= \OO_{\PP\times\PP^\vee}(-n,-n)_{|E}\otimes\OO_{\PP\times \PP^\vee}(1,1)_{|E}= \OO_{\PP\times\PP^\vee}(-(n-1),-(n-1))_{|E}
      \end{equation}
      It follows that,  for all $i$, $R^i{p_{12}}_*$ applied to the sheaf on the right of the  exact sequence (\ref{mv}) is zero for all $i$. Therefore, to compute the higher direct images $R^i{p_{12}}_*$ of the $tor_0$ on the left, it is enough to compute $R^i{p_{12}}_*$ of the sheaf in the middle. This has two summands. Concerning the first 
      one, as in the previous example there is nothing contradicting (\ref{b'}), since
      \begin{equation}\label{ebbasta!}
      R^i{p_{12}}_*(\omega_{\Delta_{13,24}Z})=\begin{cases}\omega_{\Delta_{12}X}&\hbox{for $i=0$}\\0&\hbox{otherwise}\end{cases}
      \end{equation}
       Concerning the second summand, note that the fiber of the projection 
     \begin{equation}\label{projection}p_{12}: E_{13}\times_{\Delta_{34}\PP^\vee} E_{24}\rightarrow \PP\times\PP\subset X\times X
     \end{equation}
       over a pair $(x,x^\prime)\in\PP\times \PP$, with $x\ne x^\prime$, is the  intersection of the two hyperplanes of $\PP^\vee$ corresponding to $x$ and $x^\prime$, that is a $\PP^{n-2}\subset \PP^\vee$. Now (\ref{omega}) tells that $p_{24}^*\omega_Z$, restricted to a general fiber of  (\ref{projection})
     is $\OO_{\PP^{n-2}}(-(n-1))$.  Therefore $R^i{p_{12}}_*$ applied to the second summand of the middle part of sequence (\ref{mv}) is  zero for $i<n-2$ and \emph{non-zero and supported on $\PP\times \PP$} for $i=n-2$. By an easy spectral sequence this, together with (\ref{imukai}) and (\ref{ebbasta!}), yields that 
      $R^{n-2}\Phi_{\OO_Z\boxtimes_Y\omega_Z}(\OO_{\Delta_Y})$ is non-zero. Therefore $(\ref{b'})$ is not verified and $\Phi_{\OO_Z}:D^b(X)\rightarrow D^b(Y)$ is not fully faithful.
      
        \end{example}
      With a similar, but more complicated, calculation one can prove directly the  result of Kawamata and Namikawa (\cite{kawa},\cite{nami}, see also \cite{huybrechts}) that $\Phi_{\OO_{\widetilde Z}}:\DD^b(X)\rightarrow \DD^b(Y)$ is fully faithful, where $\widetilde Z=Z\cup (\PP\times \PP^\vee)$. 
      As a disclaimer, we should point out that this method, applied to the stratified Atiyah flop and Mukai flop (\cite{cautis},\cite{kawa2}, \cite{markman}, \cite{nami2}) becomes much more complicated.

 \section{Full-faithfulness via condition (c)}
 
 In this section we use condition (c) of Theorem \ref{gv} to provide another way to check full-faithfulness. So far condition (c) has proven to be extremely useful for detecting generic vanishing when the kernel  is a Poincar\'e line bundle. In fact, if $X$ is an abelian variety, $\cP$ a Poincar\'e line bundle on $X\times \widehat X$ (where $\widehat X$ is the dual abelian variety)  and $A$ is an ample line bundle on $\widehat X$,
 the object $\Phi^{\widehat X\rightarrow X}_{\cP^\vee}(A)$, has a peculiar description, which can be seen as an effect of the "abelianity" of the context: $\Phi^{\widehat X\rightarrow X}_{\cP^\vee}(A)$ is a locally free sheaf  which is, up to pullback via the isogeny $\varphi_A:\widehat X\rightarrow X$ associated to $A$, sum of copies of the line bundle $A^\vee$ (see e.g. \cite{mukai}, Prop. 3.11(1)). Therefore, in the case of Poincar\'e kernel on dual abelian varieties, condition ($c$) is a very effective way of reducing the GV condition to  vanishing theorems. This idea, due to Hacon (\cite{hacon}), is extremely frutful in the study of the geometry of irregular varieties. It is an interesting problem to find an adequate description of the objects  $\Phi^{Z\rightarrow Y}_{\cP^\vee}(A)$ in other cases. 
 
 
 In the present context, condition (c) of Theorem \ref{gv}   leads to the  full-faithfulness criterion below. Due to the above reason, at present its range of applicability is confined to abelian or irregular varieties. 
 \begin{proposition}\label{c} Let $A$ be a sufficiently high power of an ample line bundle on $X\times X$. Then $\Phi_\E:D^b(X)\rightarrow D^b(Y)$ is fully faithful if and only if 
  \begin{equation}\label{hyp}h^i(Y, \, \Phi^{X\times X\rightarrow Y}_{\E\boxtimes_Y\E^\vee}(A)\otimes\omega_Y)= \begin{cases}0&\hbox{for $i\ne \dim Y$}\\
 h^0(A\otimes \OO_{\Delta_X})&\hbox{for $i=\dim Y$}\\
 \end{cases}
 \end{equation}
 \end{proposition}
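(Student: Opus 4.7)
My approach is to combine the characterization of full-faithfulness obtained in Section 1 (equation \eqref{left2}) with the equivalence (b) $\Leftrightarrow$ (c) of Theorem \ref{gv}. By \eqref{left2}, $\Phi_\E$ is fully faithful if and only if the complex $\Phi^{Y\rightarrow X\times X}_{\E\boxtimes_Y \E^\vee}(\omega_Y)$ equals $\OO_{\Delta_X}[-\dim Y]$. I split this into two sub-assertions: $(\alpha)$ the complex is concentrated in cohomological degree $\dim Y$, i.e.\ $\omega_Y$ satisfies WIT$(\dim Y)$ with respect to $\Phi^{Y\to X\times X}_{\E\boxtimes_Y\E^\vee}$; and $(\beta)$ its unique nonzero cohomology sheaf, which I denote $\widehat{\omega_Y}$, is isomorphic to $\OO_{\Delta_X}$.

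The first step is to identify $(\alpha)$ with the vanishing portion of the hypothesis. Applying Theorem \ref{gv} with $Z=X\times X$, $\cP=(\E\boxtimes_Y\E^\vee)^\vee$ (so that $\cP^\vee=\E\boxtimes_Y\E^\vee$), and $F=\OO_Y$ (so $F^\vee\otimes\omega_Y=\omega_Y$), condition (b) is precisely $(\alpha)$, while condition (c) is precisely the vanishing $h^i(Y,\Phi^{X\times X\to Y}_{\E\boxtimes_Y\E^\vee}(A)\otimes\omega_Y)=0$ for $i\neq \dim Y$ and $A$ a sufficiently high multiple of an ample line bundle; Theorem \ref{gv} gives their equivalence. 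The second step is to compute $h^{\dim Y}$ under $(\alpha)$: by the projection formula,
\[H^i(Y,\Phi^{X\times X\to Y}_{\E\boxtimes_Y\E^\vee}(A)\otimes\omega_Y)\cong H^i(X\times X,\,A\otimes\Phi^{Y\to X\times X}_{\E\boxtimes_Y\E^\vee}(\omega_Y)),\]
which under $(\alpha)$ reduces to $H^{i-\dim Y}(X\times X,A\otimes\widehat{\omega_Y})$, and Serre vanishing for $A$ sufficiently ample leaves just $h^0(A\otimes\widehat{\omega_Y})$ in degree $i=\dim Y$. Thus the $i=\dim Y$ clause of \eqref{hyp} becomes the numerical identity $h^0(A\otimes\widehat{\omega_Y})=h^0(A\otimes\OO_{\Delta_X})$.

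It remains to upgrade this numerical equality into assertion $(\beta)$. The counit of adjunction $\Phi^L\circ\Phi\to\mathrm{id}$, re-expressed at the level of FM kernels as in Section 1, yields under $(\alpha)$ a canonical morphism of coherent sheaves $\phi\colon\widehat{\omega_Y}\to\OO_{\Delta_X}$, and $\Phi_\E$ is fully faithful precisely when $\phi$ is an isomorphism. The fact that the numerical equality holds for \emph{all} sufficiently high multiples $A$ of a fixed ample line bundle amounts to the equality of Hilbert polynomials of $\widehat{\omega_Y}$ and $\OO_{\Delta_X}$. Combining this with the existence of the natural map $\phi$ and applying Serre vanishing to its kernel and cokernel, one forces $\phi$ to be an isomorphism. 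I expect this last step to be the main obstacle: equality of Hilbert polynomials alone never forces isomorphism of sheaves, so one must carefully exploit the naturality and the specific geometry of $\phi$ (for instance, by first verifying that its cokernel has vanishing Hilbert polynomial and hence is zero) in order to convert the numerical coincidence into a genuine identification.
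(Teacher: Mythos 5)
Your decomposition into $(\alpha)$ (WIT$(\dim Y)$) and $(\beta)$ (identification of $\widehat{\omega_Y}$ with $\OO_{\Delta_X}$), the use of the equivalence (b) $\Leftrightarrow$ (c) of Theorem \ref{gv} for the vanishing clause, and the translation of the $i=\dim Y$ clause into the numerical identity $h^0(A\otimes\widehat{\omega_Y})=h^0(A\otimes\OO_{\Delta_X})$ via the exchange isomorphism are exactly the paper's proof. The gap sits precisely where you suspect it, and your proposed fallback (``first verifying that the cokernel of $\phi$ has vanishing Hilbert polynomial'') is not reachable from the data you have set up: the numerical equality only tells you that $\chi(K\otimes A)=\chi(C\otimes A)$ for the kernel $K$ and cokernel $C$ of $\phi$, which kills neither sheaf, and, as you yourself note, equality of Hilbert polynomials alone can never separate the two.

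The missing input is that $\phi\colon\widehat{\omega_Y}\to\OO_{\Delta_X}$ is surjective \emph{a priori}, before any numerical information is invoked. The paper obtains this from the counit evaluated at skyscrapers: for each closed point $x\in X$ the adjunction morphism $\Phi^{X\to X}_{\widehat{\omega_Y}}(k_x)\to k_x$ is nonzero (it corresponds under adjunction to the identity of $\Phi_\E(k_x)$), hence surjective because its target is a skyscraper sheaf; by base change this gives surjectivity of $\phi$ on the fibres over the diagonal and therefore surjectivity of $\phi$. Once the cokernel is dead, your Serre-vanishing argument closes immediately and needs only a single sufficiently ample $A$ rather than a full Hilbert polynomial: from the sequence $0\to K\otimes A\to\widehat{\omega_Y}\otimes A\to\OO_{\Delta_X}\otimes A\to 0$ and $H^1(X\times X, K\otimes A)=0$ one gets $h^0(K\otimes A)=h^0(\widehat{\omega_Y}\otimes A)-h^0(\OO_{\Delta_X}\otimes A)=0$, and since $K\otimes A$ is globally generated for $A$ sufficiently ample, $K=0$. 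So the architecture of your argument is the right one; the single sentence you are missing is the pointwise surjectivity of the counit, which is what converts the numerical coincidence into the isomorphism $(\beta)$.
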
 
 \begin{proof} 
 By the equivalence between (b) and (c) of Theorem \ref{gv} the first line above means that $\Phi_{\E\boxtimes_Y\E^\vee}(\omega_Y)$ is a sheaf in cohomological degree $\dim Y$, denoted  $\widehat\omega_Y[-\dim Y]$ (according to notation (\ref{hat})). Therefore the adjunction morphism (\ref{left}) is, up to a shift
 \[\Phi_{\widehat{\omega_Y}}^{X\rightarrow X}\rightarrow \Phi_{\Delta_X}^{X\rightarrow X}\]
 This induces a morphism of $\OO_{X\times X}$-modules 
 \begin{equation}\label{morphism}
 \widehat{\omega_Y}\rightarrow \OO_{\Delta_X}
 \end{equation}
  which is surjective since, for all $x\in X$, the adjunction morphism $\Phi_{\widehat{\omega_Y}}^{X\rightarrow X}(k_x)\rightarrow k_x$ is non-zero, hence surjective. 
  
  We stop for a moment, to recall from \cite{pp4}, Lemma 2.1 the functorial isomorphism, for all $i$ and for all objects $G$ (resp. $A$) of $D^b(Y)$ (resp. $D^b(Z)$)
  \begin{equation}\label{leray}H^i(Y,G\otimes \Phi^{X\times X\rightarrow Y}_{\E\boxtimes_Y\E^\vee}(A))\cong H^i(X\times X, \Phi_{\E\boxtimes_Y\E^\vee}^{Y\rightarrow X\times X}(G)\otimes A) \>\footnote{this is also the key ingredient in the proof of the equivalence between $(b)$ and $(c)$ of Theorem \ref{gv}}\end{equation}
Note that, via duality, (\ref{leray}) is a restatement of the description of the adjoints of Fourier-Mukai functors, but it is more simply proved by the fact that
\[R\Gamma(Y, G\otimes \Phi^{X\times X\rightarrow Y}_{\E\boxtimes_Y\E^\vee}(A))\cong R\Gamma(Y\times X\times X, p_Y^*G\otimes (\E\boxtimes_Y\E^\vee)\otimes p_{X\times X}^*(A)\cong R\Gamma(X\times X, \Phi_{\E\boxtimes_Y\E^\vee}^{Y\rightarrow X\times X}(G)\otimes A)\]
by Leray isomorphism and projection formula. 

Going back to our proposition, given a line bundle $A$ which is a sufficiently high power of an ample line bundle, from (\ref{leray}) and the second line of  (\ref{hyp}) we get
\[ h^0(Y, \, \Phi^{X\times X\rightarrow Y}_{\E\boxtimes_Y\E^\vee}(A)\otimes   \omega_Y)=h^0(X\times X,\widehat \omega_Y\otimes A)=h^0(X\times X,\OO_{\Delta_X}\otimes A)\]
Therefore,  since the morphism (\ref{morphism}) is surjective, Serre's vanishing applied to its kernel yields that
  (\ref{morphism}) is an isomorphism. Hence (\ref{left2}) is verified.  This proves that (\ref{hyp}) implies full-faithfulness of $\Phi_\E^{X\rightarrow Y}$. 
  The other implication  follows immediately from (\ref{leray}) and Serre's vanishing. \end{proof}

  \begin{example} (\emph{Mukai's theorem on the Poincar\'e kernel})  To illustrate Prop. \ref{c} let us take $X$ an abelian variety, $Y=\widehat X$ and as $\cP$ a Poincar\`e line bundle. We will show that $\Phi_\cP^{X\rightarrow \widehat X}$ is fully faithful. From this it follows that it is in fact an equivalence. Moreover, since $\cP^\vee=(-id,id)^*=(id,-id)^*\cP$, this proves also that \[
  \Phi_\cP^{X\rightarrow \widehat X}\circ \Phi_\cP^{\widehat X\rightarrow X}=(-id)^*[-\dim X]
  \]
  i.e. the theorem of Mukai in \cite{mukai}.    As in \cite{huybrechts} Prop. 9.19, in characteristic zero one has a much easier proof,  using Bondal-Orlov's strong simplicity criterion (see \S3) . The present proof works in any characteristic, as well as Mukai's original proof.
  
  Let $L=\OO_X(n\Theta)$  be a sufficiently high power ample line bundle on $X$. We take $A=L\boxtimes L$. By Proposition \ref{c} it is sufficient to prove that 
  \begin{equation}\label{goal}h^i(\widehat X, \Phi^{X\times X\rightarrow \widehat X}_{\cP\boxtimes_{\widehat X}\cP^\vee}(L\boxtimes L))=\begin{cases}0&\hbox{if $i\ne\dim X$}\\ h^0(X,L^2)&\hbox{if $i=\dim X$}\\\end{cases}
  \end{equation}
  Let 
  \[\varphi_L:X\rightarrow \widehat X\]
   the isogeny associated to $L$. We have that
   \[(id,\varphi_L)^*\cP=(p_1+p_2)^*L\otimes p_1^*L^{-1}\otimes p_2^*L^{-1}\]
   where $p_1+p_2:X\times X\rightarrow X$ is the group law. Therefore, letting $\F=(p_1+p_2)^*L\otimes p_1^*L^{-1}\otimes p_2^*L^{-1}$, by flat base change we have that  
   \begin{equation}\label{etale}\varphi_L^*\bigl(\Phi_{\cP\boxtimes_{\widehat X}\cP^\vee}^{X\times X\rightarrow \widehat X}(L\boxtimes L)\bigr)=\Phi_{\F\boxtimes_X\F^\vee}^{X\times X\rightarrow X}(L\boxtimes L)
   \end{equation}
   where $U\boxtimes_X V$  means $p_{12}^*U\otimes p_{23}^*V$, where $p_{12}$ and $p_{23}$ are the two projections of $(X\times X)\times_X(X\times X)=X\times X\times X$ (the fibred product is with respect to the second projection of the first factor and the first projection of the second factor).
   Therefore we must compute the right-hand side of (\ref{etale}). We have that
   \[\F\boxtimes_X\F^\vee=(p_1+p_2)^*L\otimes (p_2+p_3)^*L^{-1}\otimes p_1^*L\otimes p_3^*L^{-1}\]
   Therefore 
   \begin{equation}\label{conto1}\Phi_{\F\boxtimes_{\widehat X}\F^\vee}^{X\times X\rightarrow  X}(L\boxtimes L)={p_2}_*\bigl((p_1+p_2)^*L\otimes (p_2+p_3)^*L^{-1}\otimes p_1^*L^2\bigr)
   \end{equation}
   and by Serre vanishing (actually this is not needed in the case) the outcome is a sheaf:
   \[{p_2}_*\bigl((p_1+p_2)^*L\otimes (p_2+p_3)^*L^{-1}\otimes p_1^*L^2\bigr)=R^0{p_2}_*\bigl((p_1+p_2)^*L\otimes (p_2+p_3)^*L^{-1}\otimes p_1^*L^2\bigr)[0]\]
 Via the automorphism $(p_1,p_1+p_2, p_2+p_3):X\times X\times X\rightarrow X\times X\times X$, i.e. $(x,y,z)\mapsto (x,x+y,y+z)$, the 
   line bundle $(p_1+p_2)^*L\otimes (p_2+p_3)^*L^{-1}\otimes p_1^*L^2$ is identified to $L^2\boxtimes L\boxtimes L^{-1}$. Hence
   \[h^i(X,\Phi_{\F\boxtimes_{\widehat X}\F^\vee}^{X\times X\rightarrow  X}(L\boxtimes L))=h^i(X\times X\times X, L^2\boxtimes L\boxtimes L^{-1})= \begin{cases}h^0(X,L)^2\,h^0(X,L^2)&\hbox{for $i=\dim X$}\\
      0&\hbox{otherwise}
      \end{cases}\]
      Since the degree of the isogeny $\varphi_L$ is $ h^0(X,L)^2$, we get (\ref{goal}).
  \end{example}

\providecommand{\bysame}{\leavevmode\hbox
to3em{\hrulefill}\thinspace}

\end{document}